\newcommand{\funbi}[5]
{
{#1} \left\{
\begin{array}{ll}
{#2} & \mbox{{#3}} \\
{#4} & \mbox{{#5}}
\end{array}
\right.
}
\newtheorem{theo}{Theorem}
\newtheorem{lemm}{Lemma}
\theoremstyle{plain} \newtheorem*{cond}{Presupposition of equality}
\theoremstyle{definition} \newtheorem{exam}{Example}
\theoremstyle{definition} \newtheorem{rema}{Remark}
\title{Inconsistency of Measure-Theoretic Probability\\ and\\
Random Behavior of Microscopic Systems}
\author{
\vspace{0.00cm}
Guang-Liang Li 
\footnote{Corresponding author 
(alternative e-mail address: guanglli@hotmail.com).
2010 {\em Mathematics Subject Classification}: Primary 60A05.
}
\hspace{2cm}
Victor O.K. Li
\\
Department of Electrical and Electronic Engineering\\
The University of Hong Kong \\
\{glli,vli\}@eee.hku.hk \\
}
\begin{document}

\maketitle

\begin{center}
(draft for comments)
\end{center}


\begin{abstract}
A mathematical theory is inconsistent if an ``equality''
is deducible in the theory, such that the two sides of
the ``equality'' are either quantities different in kind,
or quantities with different numerical values.
We report an inconsistency found in probability theory 
(also referred to as measure-theoretic probability).
For probability measures induced by real-valued random variables,
we deduce an ``equality'' such that one side of
the ``equality'' is a probability, but the other side is not.
For probability measures induced by extended random variables,
we deduce an ``equality'' such that its two sides
are unequal probabilities.
The deduced expressions are erroneous
only when it can be proved that measure-theoretic
probability is a theory free from contradiction.
However, such a proof does not exist.
The inconsistency
appears only in the theory rather than in the physical world,
and will not affect
practical applications 
as long as ideal events in the theory
(which will not occur physically) 
are not mistaken for observable events in the real world. Nevertheless,
unlike known paradoxes in mathematics,
the inconsistency cannot be explained away and hence must be resolved. 
The assumption of infinite additivity in the theory
is relevant to the inconsistency, and
may cause confusion of ideal events 
and real events.
As illustrated by an example in this article, since
abstract properties of mathematical entities in theoretical 
thinking are not necessarily properties of
physical quantities observed in the real world,
mistaking the former for the latter
may lead to misinterpreting random phenomena observed
in experiments with microscopic systems. Actually the inconsistency
is due to the notion of ``numbers'' adopted in 
conventional mathematics. A possible way to resolve the inconsistency is to treat
``numbers'' from the viewpoint of
constructive mathematics.
\end{abstract}

\section{Introduction}   
\label{sec1}                              
\hskip\parindent
A basic requirement
in mathematics and its various applications is that
the two sides of an equality must
be entities of the same kind \cite{Bena}. 
Actually, all equalities proved in a consistent
(i.e., contradiction-free) 
theory satisfy the above  requirement.
In a mathematical theory,
if an ``equality'' is deducible such that
its two sides are entities of different kinds, 
the theory is inconsistent.
No one will consider quantities of different kinds
(e.g., length and area) equal, 
even if their numerical values happen to be identical.
It is absurd even to ask whether
quantities different in kind are equal,
as they are calculated according to
different rules. 

Physical quantities (i.e., quantities in the physical world) 
are usually modeled by quantities in mathematics. For instance,
many physical quantities
are modeled by real-valued variables, functions or parameters.
Unlike physical quantities modeled by their mathematical counterparts,
real numbers themselves are pure numbers, which serve as one of 
the basic kinds of quantities in mathematics. Mathematical quantities
of different kinds follow different rules
in calculation. Operations for pure numbers are determined by the corresponding
axioms (e.g., see \cite{Royd}). In mathematics, however, not all real-valued quantities
are calculated according to the same rules. 
For example, probabilities are values assigned by
a probability measure to ``events''
modeled by measurable sets in a $\sigma$-algebra.
One cannot calculate probabilities in the same way
as one calculates values of functions which have nothing to do with
``events'', even if such functions and probability measures
have the same range. 
Similarly, probabilities and
pure numbers in the closed interval $[0, 1]$ 
are also calculated according to different rules,
and hence belong to different kinds of quantities. 
A probability describes quantitatively how likely an event will occur.
No pure number conveys such information. 
Rules of calculation with
pure numbers in the interval $[0, 1]$ are not necessarily
subject to the constraints imposed on calculating probabilities.

As shown above,
if two mathematical quantities both take real numbers as their values but
are calculated according to different rules, they
are different in kind.
Let $\Gamma$ and $\Psi$ denote two real-valued
expressions, which may be the
limits of sequences $(\Gamma_n)_{n\geq 1}$ and $(\Psi_n)_{n\geq 1}$,
respectively,
where $\Gamma_n=\Psi_n$ for each $n$, and
$\Gamma_n$ and $\Psi_n$ represent quantities of
the same kind; but the quantity represented by
$\Gamma$ (or $\Psi$) may or may not
belong to those kinds of quantities represented by
$\Gamma_n$ (or $\Psi_n$).         
For $\Gamma =\Psi$, the basic requirement mentioned
at the beginning of this section
reduces to a trite, {\em necessary} condition. 
We refer to this condition as ``presupposition of equality''
and state it below.

\begin{cond}
In a consistent mathematical theory, the two sides of $\Gamma=\Psi$
represent quantities of the same kind which are calculated based on the
same rules. The rules are determined by the kind
of quantities to which $\Gamma$ and $\Psi$ belong.
\end{cond}

The measure-theoretic foundations for
probability theory were laid by Kolmogorov in 1933 \cite{Kolm}.
Since then, it has been postulated (somewhat implicitly) that
measure-theoretic probability is a consistent theory.
However, the postulated consistency of the theory 
appears to be incompatible with the presupposition of equality.
In this article, we report an inconsistency of measure-theoretic probability.
For probability measures induced by real-valued random variables,
we deduce an ``equality'' such that one side of
the ``equality'' is a probability, 
but the other side is not.
For probability measures induced by extended random variables,
we deduce an ``equality'' such that its two sides
are unequal probabilities.
Known paradoxes in mathematics  
are either considered issues of formal 
logic (such as Russell's paradox), or treated as deducible
statements which are counterintuitive but 
true (e.g., see \cite{Czyz, Szek}), 
where ``true'' implies ``without contradiction''. 
Regarding deducible statements as 
results without contradiction is not grounded
on mathematical justification; it originates from 
a popular belief, i.e., 
mathematics is free from contradiction as it is.
The belief is an issue of mathematical philosophy (e.g., see \cite{Bena2}), 
which is not our concern.
Unlike the paradoxes treated in \cite{Czyz, Szek} or discussed by logicians,
the inconsistency found in measure-theoretic
probability cannot be explained away.

In the rest of this article, we first 
deliberate why the limit of a convergent sequence from
the interval $[0, 1]$ may not
necessarily be a limit probability, even if the terms of
the sequence are all
probabilities. 
After showing how to decide whether 
a limit probability can be calculated as
the limit of a sequence of probabilities (Section \ref{sec2}),
we review briefly the notion of tightness
of probability measures induced by random variables
(Section \ref{sec3}). Tightness is a condition to
prevent mass from ``escaping to infinity'' \cite{Bill,Poll}.
With a tight sequence of measures,
we construct a sequence of
measures which is not tight, and then
reveal the inconsistency of
measure-theoretic probability (Section \ref{sec4}).
The inconsistency is relevant to infinite additivity, which is
one of the basic assumptions in probability theory \cite{Kolm}.
Next, we clarify an ambiguity caused by the assumption
of infinite additivity (Section \ref{sec5}).
Because of this assumption,
the meaning of ``a realizable event'' is ambiguous.
Clearly, abstract properties of mathematical entities 
in theoretical thinking are not necessarily properties of
physical quantities observed in the real world. 
However, misled by the assumption of infinite additivity,
some people might mistake the former for the latter; as a result,
they might give problematic or even incorrect interpretations of
random phenomena observed in experiments with 
microscopic systems. 
We discuss this issue in the context of an
experiment in quantum physics (Section \ref{sec6}). 
Finally, we explain
why measure-theoretic probability can be so
successful even though it is not a consistent theory,
and discuss briefly how to avoid undesirable consequences 
caused by the inconsistency (Section \ref{sec7}). Actually
the inconsistency is due to the
notion of ``numbers'' adopted in conventional mathematics.
A possible way to resolve the inconsistency is to
treat ``numbers'' from the point of view of constructive mathematics
(e.g., see \cite{Brid,Rose}).

\section{Limit Probability Measure and Limit Probability}                                 
\hskip\parindent
\label{sec2}
In measure-theoretic probability, random experiments
are described by probability measures 
together with the corresponding measurable spaces.
However, when concerned with values 
and probability distributions
of random variables, we can make no
mention of the underlying measurable 
spaces associated with experiments.
Let ${\mathbb P}$ be a probability measure on a
measurable space $(\Omega, {\mathcal A})$.
Denote by ${\mathcal R}$ the $\sigma$-algebra
of Borel subsets of the real line ${\mathbb R}$ which is
equipped with the topology induced 
by the usual metric. 
Let ${\mathscr F}$ be a family of probability measures 
induced by random variables defined on $\Omega$ measurable
${\mathcal A}/{\mathcal R}$. As the probability distributions of
the random variables on $\Omega$,
members of ${\mathscr F}$ may simply be
regarded as measures defined on 
the measurable space $({\mathbb R}, {\mathcal R})$.
We do not need to specify ${\mathscr F}$ or $\Omega$. 
It suffices to know that 
random variables with their
measures belonging to ${\mathscr F}$ are 
defined on a common sample space $\Omega$. 
Not all measures on $({\mathbb R}, {\mathcal R})$ 
are members of ${\mathscr F}$, however.
Apparently, there are measures on $({\mathbb R}, {\mathcal R})$ 
induced by irrelevant random variables which are not defined on 
the sample space $\Omega$.

Let $(\rho_n)_{n\geq 1}$ be a sequence of probability measures
induced by real-valued random variables.
If $(\rho_n)_{n\geq 1}$ converges weakly,
denote by $\rho$ the weak limit
of $(\rho_n)_{n\geq 1}$ where $\rho$ is a probability measure on ${\mathcal R}$. 
Such convergence means that,
as $n\to\infty$
\[
\int_{\mathbb R}fd\rho_n\to \int_{\mathbb R}fd\rho
\]
for every
bounded, continuous function $f$ on ${\mathbb R}$ \cite{Bill,Poll}.
For probability measures $\rho, \rho_1, \rho_2,\cdots$
induced respectively by 
random variables $V, V_1, V_2, \cdots$,
if $\rho_n$ converges weakly to $\rho$,
$V_n$ is said to converge in distribution to $V$.
If random variables $U$ and $V$ have the same distribution,
$V_n$ also converges in
distribution to $U$ \cite{Poll}. The random variables
$V, V_1, V_2, \cdots$ and $U$ may be defined on
distinct probability spaces \cite{Bill}.

If a sequence $(\rho_n)_{n\geq 1}$   
converges weakly to $\rho$, and if not only
the terms $\rho_n$ but also the weak limit $\rho$ belong to ${\mathscr F}$,
we call $\rho$ the {\em limit probability measure} of $(\rho_n)_{n\geq 1}$.
For a limit probability measure $\rho$ and
an event (measurable set) $E\in{\mathcal R}$,
we call $\rho(E)$ 
a {\em limit probability}.
For every $\alpha\in{\mathscr F}$, there is a sequence
$(\alpha_n)_{n\geq 1}$ of measures from ${\mathscr F}$ such that
$\alpha_n=\alpha$ for all $n$. Hence $\alpha$ is 
the limit probability measure of $(\alpha_n)_{n\geq 1}$.
So a probability $\alpha(E)$ is trivially a limit probability.
The presupposition of equality implies that,
if one side of an equality represents 
a probability, so does the other side, and
operations performed on the
two sides must obey
the same rules for calculating probabilities.
If $\rho$ is the limit probability measure of
$(\rho_n)_{n\geq 1}$, 
we also write $\rho=[\lim_n\rho_n]$, where the brackets 
cannot be omitted. Accordingly, $\rho(E)=[\lim_n\rho_n](E)$. 
With $[\lim_n\rho_n]$ as a synonym for the limit probability measure $\rho$, 
we want to emphasize that rules for calculating
limit probabilities
and rules for calculating limits of real sequences are
different. Usually, a convergent real sequence 
from the interval $[0, 1]$
may not necessarily converge to a probability, even if
the terms of the sequence are all probabilities.
But there is an exception: the limit of a real sequence 
may ``coincide'' with a limit probability. 
We shall explain what this means shortly.

Whenever we say ``events $E_n\in{\mathcal R}$ form a sequence 
$(E_n)_{n\geq 1}$'', we assume that, for a given $E_1\in{\mathcal R}$,
there is a map $H:{\mathcal R}\mapsto{\mathcal R}$ which defines 
$E_n, n > 1$ recursively as follows:
\[
E_2=H(E_1),\;
E_3 = H(E_2)= H(H(E_1) = H^{(2)}(E_1), \;
\cdots,\;
E_n=H^{(n-1)}(E_1)
\]
and as $n\to\infty$, $E_n$ tends to a limit event $E=\lim_nH^{(n)}(E_1)$. 
An example of such $H$
is the identity map, which maps any given $E_1\in{\mathcal R}$ to
$E_1$ itself. If $H$ is not the identity map, the limit event $E$ may or may not
be a set in ${\mathcal R}$.

We are interested in measures
$\rho_n\in{\mathscr F}, n = 1, 2, \cdots$, such that
for events $E_n$ in $(E_n)_{n\geq 1}$, the 
probabilities $\rho_n(E_n)$ form a convergent 
sequence. We denote the limit of $(\rho_n(E_n))_{n\geq 1}$
by $\lim_n[\rho_n(E_n)]$ rather than
$\lim_n\rho_n(E_n)$. The meaning of
$\lim_n\rho_n(E_n)$ is ambiguous.
For instance, suppose that $E_n$ tends to a limit event $E$
and $\rho_n$ converges weakly to $\rho$ 
in a generalized sense as explained
in Remark \ref{re} (see Section \ref{sec3}).
Then $\lim_n\rho_n(E_n)$ 
may mean either $\rho(E)$ or $\lim_n[\rho_n(E_n)]$. The former
is always a probability; but the latter
may or may not be a probability.
The use of symbols ``$\lim_n[\rho_n(E_n)]$'' and
``$[\lim_n\rho_n](E)$'' may help one
avoid mixing up rules for calculating
limits of real sequences with rules for calculating
limit probabilities.
When $\lim_n[\rho_n(E_n)]$
is not a probability,
it is still 
the limit of a real sequence.

\begin{exam}
\label{eg1}
For $n=1, 2, \cdots$,
let $\delta_n$ be the Dirac measure concentrated on $E_n=\{n\}$ and let
$\rho_n = \delta_n$, where $\rho_n$ is
the probability measure induced by a degenerate random variable $W_n = n$. Since 
\[
\delta_n(\{n\})={\mathbb P}(W_n=n)=1
\]
for each $n$, one may treat
$\delta_n(\{n\}), n = 1, 2, \cdots$ as a sequence of 1's.
Such treatment implies
\[
\lim_{n\to\infty}[\delta_n(\{n\})] = 1.
\]
By definition, $\lim_n[\delta_n(\{n\})]$
is the trivial limit of a constant sequence.
Since $\delta_n$ does not converge weakly to a limit
defined on ${\mathcal R}$, $\lim_n[\delta_n(\{n\})]$
is not a limit probability.
The events $E_n, n > 1$ associated with
$\rho_n(E_n)$ are defined by $H:{\mathcal R}\mapsto{\mathcal R}$ such that
\[
E_n= H(\{n-1\})=\{n\}
\]
with $E_1=\{1\}$. Evidently, $(E_n)_{n\geq 1}$ has no limit in
${\mathcal R}$ for $\lim_nH^{(n)}(\{1\})=\{\infty\}\not\in{\mathcal R}$. 
Colloquially, one may also express this fact 
by saying that
all the mass ``escapes to infinity'' as $n\to\infty$.
$\Box$\end{exam}

When the limit event $E$ is in ${\mathcal R}$ and the limit
probability measure
$[\lim_n\rho_n]$ exists where $[\lim_n\rho_n] \in {\mathscr F}$ by definition,
we say that
$[\lim_n\rho_n](E)$ ``coincides''
with $\lim_n[\rho_n(E_n)]$, if they not only
have the same numerical value but also satisfy
the presupposition of equality, which requires
$\lim_n[\rho_n(E_n)]$ to be a probability. In other words,
by ``$[\lim_n\rho_n](E)$ and $\lim_n[\rho_n(E_n)]$ coincide''
we mean that not only

\begin{equation}
\label{coincide}
[\lim_{n\to\infty}\rho_n](E)=\lim_{n\to\infty}[\rho_n(E_n)]
\end{equation}
holds but also both sides of (\ref{coincide}) are probabilities.

\begin{rema}
Suppose that, as $n\to\infty$, three sequences different in kind, i.e.,
a sequence of measures $\rho_n\in {\mathscr F}$, a sequence of
events $E_n\in{\mathcal R}$, and a sequence of 
probabilities $\rho_n(E_n)$ approach their respective
limits: $[\lim_n\rho_n]$, a limit probability measure; $E$,
a limit event; and
$\lim_n[\rho_n(E_n)]$, the limit of a real sequence
$(\rho_n(E_n))_{n\geq 1}$. As a result,
corresponding to $\rho_n(E_n)$, there exist 
a limit of a real sequence $\lim_n[\rho_n(E_n)]$ and
a limit probability $[\lim_n\rho_n](E)$. 
If $[\lim_n\rho_n](E)$ and $\lim_n[\rho_n(E_n)]$ coincide,
the limit probability can also be calculated 
as the limit of the real sequence;
otherwise the limit probability
must be calculated as the probability assigned 
by measure $[\lim_n\rho_n]$ to event $E$.
$\Box$
\end{rema}

\begin{exam}
\label{eg2}
For $n=1, 2, \cdots$,
write $\delta_{1/n}$ for the Dirac measure concentrated on $\{1/n\}$
and let $\rho_n = \delta_{1/n}$, where $\rho_n$ is
the probability measure induced by a degenerate random variable 
$Q_n = 1/n$.
Let $H:{\mathcal R}\mapsto{\mathcal R}$ be the
identity map which defines $E_n=E_1$ for all $n > 1$
with $E_1 = (-\infty, 0]$. For each $n$,
\[
\delta_{1/n}({E_n})=\delta_{1/n}({E_1})={\mathbb P}(Q_n\in E_1) =0.
\]
By treating
$\delta_{1/n}(E_1), n = 1,2, \cdots$ as a sequence of 0's, one has
\[
\lim_{n\to\infty}[\delta_{1/n}(E_1)] = 0.
\]
The above trivial limit of the constant sequence is not
a limit probability. Actually
$[\lim_n\delta_{1/n}] = \delta_0$, 
where $\delta_0$ is the Dirac measure concentrated on $\{0\}$.
So $[\lim_n\delta_{1/n}](E_1) = 1$, which does not
coincide with $\lim_n[\delta_{1/n}(E_1)]$.
$\Box$
\end{exam}

\begin{exam}
\label{eg3}
For $E_n$ and $\rho_n$ defined in Example \ref{eg1} and
Example \ref{eg2}, respectively, when $n > 1$, 
\[
\rho_n(E_n)=\delta_{1/n}(\{n\})={\mathbb P}(Q_n =n) =0.
\]
As we already know, 
$\lim_nH^{(n)}(\{1\})=\{\infty\}\not\in{\mathcal R}$.
Clearly
\[
\lim_{n\to\infty}[\rho_n(E_n)] = 0
\]
has nothing to do with calculating a limit probability.
$\Box$
\end{exam}

The simple examples
given above involve only measures
induced by degenerate random variables. 
Actually, even for
a convergent sequence $(\rho'_n(E'_n))_{n\geq 1}$ of probabilities, 
where $\rho'_n, n = 1, 2, \cdots$ are all induced by
non-degenerate random variables, the limit of $(\rho'_n(E'_n))_{n\geq 1}$
may not necessarily be a probability, 
see Example \ref{eg5} in the next section.

Let $(\rho_n)_{n\geq 1}$ be a
sequence of measures from ${\mathscr F}$ such that
$[\lim_n\rho_n] =\rho\in {\mathscr F}$.
In other words, $\rho$ is the limit probability measure of $(\rho_n)_{n\geq 1}$.
For a given $E_1\in{\mathcal R}$, let $E_n, n>1$
be defined by 
$H:{\mathcal R}\mapsto{\mathcal R}$ recursively,
such that $\lim_nH^{(n)}(E_1) = E\in{\mathcal R}$.
Similarly, let $E_n', n>1$ be defined recursively by
$H':{\mathcal R}\mapsto{\mathcal R}$ with
$E'=\lim_nH'^{(n)}(E_1')\in {\mathcal R}$ where
$E_1'\in{\mathcal R}$ is given, and let
$(\rho_n')_{n\geq 1}$ be a sequence of measures from ${\mathscr F}$.
We have the following lemma.

\begin{lemm}
\label{le}
Assume that the three
conditions (a), (b) and (c) below are all satisfied.
(a) As $n\to\infty$,
\[
|\rho_n(E_n) - \rho_n'(E_n')|\to 0,
\]
(b) $\lim_n[\rho_n(E_n)]$ coincides with $[\lim_n\rho_n](E)$, and
(c) measure-theoretic probability is a consistent theory.
Then $\lim_n[\rho_n'(E_n')]$ is a probability.
\end{lemm}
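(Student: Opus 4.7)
The plan is to chain together the three hypotheses: use (a) to equate the two real limits, use (b) to identify one of them with a probability, and then use the meta-principle supplied by (c) to force the other limit to be a probability as well.

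First I would observe that hypothesis (b) already implies that $(\rho_n(E_n))_{n\geq 1}$ is convergent, since its limit is asserted to coincide with $[\lim_n\rho_n](E)=\rho(E)$, and that this limit is a probability in the sense required by the presupposition of equality. Hypothesis (a) then forces the real sequence $(\rho_n'(E_n'))_{n\geq 1}$ to converge as well, with
$$\lim_{n\to\infty}[\rho_n'(E_n')]\;=\;\lim_{n\to\infty}[\rho_n(E_n)]\;=\;\rho(E).$$
This is just the elementary fact that two real sequences whose difference tends to zero share their limit whenever one exists.

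Second, I would invoke the presupposition of equality under the consistency hypothesis (c). The displayed equality above is deducible inside the theory, so by (c) its two sides must represent quantities of the same kind. The right-hand side is the probability $\rho(E)$, assigned by the measure $\rho\in\mathscr{F}$ to the event $E\in\mathcal{R}$; hence the left-hand side $\lim_n[\rho_n'(E_n')]$ is also a probability, which is the desired conclusion.

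The main subtlety — and really the only substantive content of the argument — is a careful separation of two senses of ``having the same numerical value''. Hypothesis (a) by itself only delivers agreement of real-valued limits; it does not automatically promote $\lim_n[\rho_n'(E_n')]$ to a probability, since in general a convergent real sequence of probabilities need not have a probability as its limit, as illustrated by Examples \ref{eg1}--\ref{eg3}. The promotion is available here only because (c) supplies the meta-principle that the two sides of any deducible equality must belong to the same kind of quantity. Notably, no concrete event or measure in $\mathscr{F}$ is exhibited to witness $\lim_n[\rho_n'(E_n')]$ as a probability; that omission is presumably deliberate, since the lemma appears poised to be used contrapositively to expose the inconsistency announced in the abstract.
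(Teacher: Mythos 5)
Your proposal is correct and follows essentially the same route as the paper's own proof: condition (a) gives $\lim_n[\rho_n(E_n)]=\lim_n[\rho_n'(E_n')]$, condition (b) identifies this common value with the probability $\rho(E)$, and condition (c) together with the presupposition of equality promotes $\lim_n[\rho_n'(E_n')]$ to a probability. Your added remarks on convergence of the sequences and on the intended contrapositive use are consistent with the paper's setup and do not change the argument.
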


\begin{proof}
Condition (a) implies
$\lim_n[\rho_n(E_n)] = \lim_n[\rho_n'(E_n')]$, which together with
condition (b) implies 
$\rho(E)=\lim_n[\rho_n'(E_n')]$.
The result to be proved then follows
from condition (c) and
the presupposition of equality.
\end{proof}

The above lemma is equivalent to its contrapositive. We shall use
the contrapositive of Lemma \ref{le} to reveal the inconsistency of
measure-theoretic probability.

\section{Tightness of Probability Measures}                                 
\hskip\parindent
\label{sec3}
The sequence of the Dirac measures $(\delta_n)_{n\geq 1}$ induced
by the degenerate
random variables $W_n = n$ in Example \ref{eg1}
illustrates a simple
scenario of mass ``escaping to infinity''.
Escape of mass also happens
to sequences of probability measures induced by
non-degenerate
random variables.

\begin{exam}
\label{eg4}
Let $(\mu_n)_{n\geq 1}$ be a sequence of probability measures
induced by random variables $Z_n$. 
For any given $n$, the
probability mass function of $Z_n$ is determined by $\mu_n$ as follows.
\begin{equation}
\label{Znj}
\funbi{\mu_n(\{j\})= {\mathbb P}(Z_n=j)=}
{(1-q)q^{n-j},}{$j=1, 2, \cdots, n-1$}
{1-q+q^n,}{$j=n$}
\end{equation}
where $0<q<1$. As $n\to\infty$, all the mass generated by (\ref{Znj})
will ``escape to infinity''. For any given $j < n$,
$\lim_n[\mu_n(\{j\})] = \lim_n(1-q)q^{n-j}=0$. For $j = n$,
$\mu_n(\{n\})=1-q+q^n$ is the mass at $n$.
When $j = n-k$ where
$1\leq k < n$ is fixed, $\mu_n(\{n-k\}) = (1-q)q^k$ is
the mass at $n-k$.
Similar to                                                           
the unit mass at $n$ expressed by the Dirac measure $\delta_n$,
the non-vanishing part $1-q$ of
the mass $1-q+q^n$ at $n$ and the mass $(1-q)q^k$ at $n-k$
will all ``escape to infinity'' as $n\to\infty$. So
\[
(1-q) + (1-q)\sum_{k\geq 1}q^k = 1
\]
is the total escaped mass.
$\Box$\end{exam}

\begin{exam}
\label{eg5}
As shown in the previous example,
$(\mu_n)_{n\geq 1}$ has no limit probability measure  
on ${\mathcal R}$ because of escape of mass. 
From (\ref{Znj}) we see that, for each $n$,
the probability of $\{Z_n = n\}$ is
\[
{\mathbb P}(Z_n=n)=
\mu_n(\{n\}) = 1-q + q^n.
\]
Since $(1-q+q^n)_{n\geq 1}$ is a convergent sequence,
\[
\lim_{n\to\infty}[\mu_n(\{n\})] =\lim_{n\to\infty}(1-q + q^n) = 1-q.
\]
However, as partial mass ``escaped to infinity'',
$\lim_n[\mu_n(\{n\})]$ is not a probability. Similarly,
the probability of $\{Z_n < n\}$ is
\begin{equation}
\label{PrZn<n}
{\mathbb P}(Z_n<n)=\mu_n(E_n') = 1-{\mathbb P}(Z_n=n)=
1-\mu_n(\{n\})=q-q^n
\end{equation}
where a map $H':{\mathcal R}\mapsto{\mathcal R}$ defines
\[
E_n' = H'(E_{n-1}') = E_{n-1}'\cup [n-1, n)=(-\infty, n),\; n > 1
\]
recursively with $E_1' = (-\infty, 1)$. As one can readily see,
\begin{equation}
\label{R}
E'=\lim_{n\to\infty}H'^{(n)}(E_1')={\mathbb R}\in{\mathcal R}.
\end{equation}
But
\[
\lim_{n\to\infty}[\mu_n(E_n')]
=\lim_{n\to\infty}(q-q^n)=q
\]
is also part of escaped mass rather than a probability.
$\Box$\end{exam}

Let $(\rho_n)_{n\geq 1}$ be a sequence of probability
measures defined on ${\mathcal R}$.
Tightness is a condition imposed on $(\rho_n)_{n\geq 1}$
to prevent escape of mass.
If $(\rho_n)_{n\geq 1}$ is tight, then
for each $\epsilon > 0$, there is an interval
$I_{\epsilon}\subset {\mathbb R}$,
such that $\sup_n\rho_n(I_{\epsilon}^c) < \epsilon$ where 
$I_{\epsilon}^c = {\mathbb R}\setminus I_{\epsilon}$.

\begin{exam}
\label{eg6}
Consider probability measures $\mu_n$ induced by $Z_n$ in
Example \ref{eg4}.
For each $n$, $Z_{n+1}$ takes larger values with larger probabilities.
The values of $Z_{n+1}$ include $n+1$ and all the values of $Z_n$.
The sequence $(\mu_n)_{n\geq 1}$ is not tight.
To see this,
consider $\epsilon = 1-q$. By (\ref{Znj}), 
for all $n$, $\mu_n(\{n\}) = 1-q + q^{-n} > 1-q$.
In particular, for all real numbers $b > 0$ and for all $n > N_b$, 
$\mu_n(\{n\}) > 1-q$, where $N_b$
is the smallest integer larger than or equal to $b$.
Consequently, for each interval $I_b=(-\infty, b)$, 
$\sup_n\mu_n(I_b^c) > 1-q$.
The probability mass function (\ref{Znj}) 
explains not only why $(\mu_n)_{n\geq 1}$ fails to be tight
but also where all the mass goes as shown in
Example \ref{eg4}.
$\Box$
\end{exam}

Tightness is necessary but not sufficient
for $(\rho_n)_{n\geq 1}$ to converge weakly to a limit defined on ${\mathcal R}$.
However, if $(\rho_n)_{n\geq 1}$
has a weak limit on ${\mathcal R}$, then $(\rho_n)_{n\geq 1}$
is tight.

\begin{exam}
\label{eg7}
Denote by $\lambda$ and $\lambda_n$ the probability measures 
induced by independent and identically distributed
Bernoulli random variables $X$ and $X_n, n = 1,2, \cdots$, respectively.  
The above random variables are all defined 
on a common sample space $\Omega$.
The probability mass function of $X_n$ is
\begin{equation}
\label{Xn}
\funbi{\lambda_n(\{a\})={\mathbb P}(X_n=a)=}{q,}{$a = 0$}
{1-q,}{$a=1$}
\end{equation}
where $0<q<1$. The sequence $(\lambda_n)_{n\geq 1}$ 
converges weakly to $\lambda$.
Let $E_1 = \{a\}$ and $H:{\mathcal R}\mapsto{\mathcal R}$ be
the identity map which defines $E_n = \{a\}$ for all $n > 1$. So
$\lim_nH^{(n)}(E_1)=\{a\}$. Apparently,
$[\lim_n\lambda_n](\{a\}) = \lambda(\{a\})$ which coincides with
$\lim_n[\lambda_n(\{a\})]$, and $(\lambda_n)_{n\geq 1}$ is tight.
$\Box$\end{exam}

\begin{exam}
\label{eg8}
Let $X_n, n = 1, 2, \cdots$ be the
Bernoulli random variables in Example \ref{eg7}. Denote by $Y_n$ the
largest among $X_1, X_2, \cdots, X_n$.
\begin{equation}
\label{Yn}
Y_n = \max\{X_j:j\in\{1,2,\cdots,n\}\},\;
n=1,2,\cdots.
\end{equation}
Let $\gamma_n$ be the probability measures induced by $Y_n$.
The probability mass function of $Y_n$ is
\[
\funbi{\gamma_n(\{a\})={\mathbb P}(Y_n = a)=}
{q^n,}{$a=0$}
{1-q^n,}{$a=1$.}
\]
The sequence $(\gamma_n)_{n\geq 1}$ converges
weakly to $\gamma$ where
$\gamma$ is given by
\[
\funbi
{\gamma(\{a\})=}{0,}{$a=0$}
{1,}{$a=1$}
\]
i.e., $\gamma$ is the Dirac measure $\delta_1$ concentrated on $\{1\}$.
Evidently $(\gamma_n)_{n\geq 1}$ is tight, and
$[\lim_n\gamma_n](\{a\}) = \gamma(\{a\})$ which coincides with
$\lim_n[\gamma_n(\{a\})]$.
$\Box$\end{exam}

\begin{rema}
\label{re}
By replacing ${\mathscr F}$ with a larger family 
${\overline{\mathscr F}}\supset{\mathscr F}$ 
and modifying the definition of weak convergence
accordingly, the definitions of limit probability measure
and limit probability may be generalized \cite{Loev}. 
The extended real line $\overline{\mathbb R}$
and the $\sigma$-algebra $\overline{\mathcal R}$ of subsets of $\overline{\mathbb R}$
constitute a measurable space $(\overline{\mathbb R}, \overline{\mathcal R})$.
The members of ${\overline{\mathscr F}}$ are probability measures
defined on $(\overline{\mathbb R}, \overline{\mathcal R})$ and
induced by extended random variables (on a common sample space $\Omega$)
which may take $\pm\infty$ as their values. Of course
no mass will be assigned to $\{\pm\infty\}$ by any measures in ${\mathscr F}$.
But positive probabilities can be assigned to $\{\pm\infty\}$
by measures in $\overline{\mathscr F}\setminus{\mathscr F}$.
Suppose that a sequence $(\rho_n)_{n\geq 1}$ of
measures from ${\mathscr F}$ converges weakly to a measure 
$\rho\in\overline{\mathscr F}$. 
By the {\em generalized} definitions, 
$\rho$ is the limit probability measure of $(\rho_n)_{n\geq 1}$ and
$\rho(E)$ a limit probability where $E\in \overline{\mathcal R}$,
even if $\rho\not\in{\mathscr F}$.
We still write 
$\rho = [\lim_n\rho_n]$ to avoid confusion of
limit probabilities and limits of sequences from 
the interval $[0, 1]$. 
$\Box$
\end{rema}

\begin{exam}
\label{eg9}
By the original definitions, $(\mu_n)_{n\geq 1}$ 
in Example \ref{eg5}
has no limit probability measure because of 
``mass escaping to infinity''.
However, according to the generalized definitions,
the limit probability measure $[\lim_n\mu_n]$ of
$(\mu_n)_{n\geq 1}$ is the Dirac
measure concentrated on $\{\infty\}$
induced by an extended (degenerate) random variable $Z=\infty$.
As part of mass at $\{\infty\}$,
$\lim_n[\mu_n(E_n')]$ is not
a probability. In contrast to $\lim_n[\mu_n(E_n')]$,
$[\lim_n\mu_n](E')$ is a probability
where $E' ={\mathbb R}$, see (\ref{R}). In other words,
calculated as the value of $[\lim_n\mu_n]$ at $E'={\mathbb R}$,
$[\lim_n\mu_n]({\mathbb R}) = 0$ is
the limit probability
corresponding to $\mu_n(E'_n)$.
$\Box$
\end{exam}

\section{The Inconsistency}
\hskip\parindent
\label{sec4}
With a tight sequence of
measures from ${\mathscr F}$, we can construct
a sequence of measures which is also from ${\mathscr F}$
but not tight.
The two sequences of measures are both
induced by non-degenerate random variables.

\begin{exam}
\label{eg10}
In terms of random variables $X_n$ and $Y_n$ in Example \ref{eg7} and
Example \ref{eg8}, respectively,
we may construct random variables
$Z_n$ as follows.
\begin{equation}
\label{Zn}
Z_n =\max\{j:X_j=Y_n,j\in\{1,2,\cdots,n\}\},\; n =1, 2, \cdots.
\end{equation}
For $j\leq n$, there may be more than one $j$'s
such that $X_j=Y_n$. The value of $Z_n$ is the largest of such $j$.
Let $\mu_n$ be the probability measures induced by 
$Z_n$. As we can readily see,
for $1\leq j < n$,
\[
\{Z_n = j\} = \{X_j = 1\}\cap\{X_i = 0, j < i \leq n\}
\]
and
\[
\{Z_n = n\} = \{X_n = 1\}\cup\{X_i = 0, i = 1, 2, \cdots, n\}
= \{X_n = 1\}\cup\{Y_n = 0\}.
\]
So the probability mass function of $Z_n$ is the same as (\ref{Znj})
with $q$ being the probability of $\{X_n = 0\}$ given by $\lambda_n$, see (\ref{Xn}).
From Example \ref{eg6}, $(\mu_n)_{n\geq 1}$ is not tight, though it is
constructed based on a tight sequence $(\lambda_n)_{n\geq 1}$.
$\Box$\end{exam}

As shown by the lemma below,
an ``equality'' in measure-theoretic probability can be deduced,
such that one side of the ``equality'' is a probability
but the other side is not. 
Consider $(\lambda_n)_{n\geq 1}$ and $(\mu_n)_{n\geq 1}$,
where $\lambda_n$ and $\mu_n$ are the probability measures 
induced by $X_n$ and $Z_n$, respectively (see Example \ref{eg7} and (\ref{Zn}) 
in Example \ref{eg10}).

\begin{lemm}
\label{le2}
The following expression  
\begin{equation}
\label{th-eq}
\lambda(\{0\})
=\lim_{n\to\infty}[\mu_n(E_n')]
\end{equation}
is deducible, where $\lambda=[\lim_n\lambda_n]$, $\lambda(\{0\})$ is
the probability of ``a single Bernoulli trial fails'',
and $E_n'=(-\infty, n)$.
But $\lim_n[\mu_n(E_n')]$ is not a probability.
\end{lemm}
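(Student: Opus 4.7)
The plan is to evaluate both sides of (\ref{th-eq}) explicitly, observe that they share the common numerical value $q$, and then invoke the escape-of-mass analysis already established in Examples \ref{eg5}, \ref{eg6}, \ref{eg9} and \ref{eg10} to argue that the right-hand side cannot be interpreted as a probability. The work is almost entirely bookkeeping: the nontrivial content is conceptual, namely that a bona fide probability $\lambda(\{0\})$ ends up equal to a real limit which is shown to be escaped mass rather than a probability.

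First I would handle the left-hand side. Since $\lambda_n$ is the Bernoulli measure of (\ref{Xn}) and $(\lambda_n)_{n\geq 1}$ converges weakly to $\lambda$ (Example \ref{eg7}), one has $\lambda(\{0\}) = q$ and this value is genuinely a probability, being the probability of failure of a single Bernoulli trial. Next I would turn to the right-hand side. By Example \ref{eg10} the probability mass function of $Z_n$ is exactly the one in (\ref{Znj}), so the calculation in (\ref{PrZn<n}) yields $\mu_n(E_n') = q - q^n$ for the events $E_n' = (-\infty,n)$. Passing to the limit gives $\lim_{n\to\infty}[\mu_n(E_n')] = q$, matching the left-hand side and establishing the numerical "equality" (\ref{th-eq}).

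The second part of the statement is that $\lim_n[\mu_n(E_n')]$ is not a probability. This is where I would lean on the tightness discussion of Section \ref{sec3}. By Example \ref{eg6} (whose argument transfers verbatim because $\mu_n$ of Example \ref{eg10} has exactly the mass function (\ref{Znj})) the sequence $(\mu_n)_{n\geq 1}$ fails to be tight, so it has no weak limit in $\mathscr{F}$ and hence no limit probability measure on $\mathcal{R}$ in the original sense. Under the generalized framework of Remark \ref{re}, Example \ref{eg9} identifies $[\lim_n \mu_n]$ as the Dirac measure at $\{\infty\}$; combined with (\ref{R}) this gives $[\lim_n \mu_n](E') = [\lim_n \mu_n](\mathbb{R}) = 0$, which differs from $q$. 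Thus the real number $\lim_n[\mu_n(E_n')]$ does not coincide with any limit probability: it is part of the mass that escapes to infinity, precisely as spelled out in Example \ref{eg9}.

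The only delicate step is the last one, because superficially one might object that $q$ is a number in $[0,1]$ and so could be reinterpreted as some probability. The point to emphasize is that the presupposition of equality (Section \ref{sec1}) forces us to check whether $\lim_n[\mu_n(E_n')]$ arises as $\rho(E)$ for some probability measure $\rho$ and event $E$ consistent with the construction; the generalized weak limit $\delta_\infty$ assigns $0$, not $q$, to the candidate event $E' = \mathbb{R}$, so no such reinterpretation is available. With the two sides of (\ref{th-eq}) thereby identified as quantities of different kinds, the deduction is complete and sets up the contradiction to be exploited via the contrapositive of Lemma \ref{le}.
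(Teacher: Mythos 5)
Your proposal is correct within the paper's framework, but it reaches (\ref{th-eq}) by a different route than the paper does. You evaluate both sides numerically: $\lambda(\{0\})=q$ from Example \ref{eg7}, and $\lim_n[\mu_n(E_n')]=\lim_n(q-q^n)=q$ from (\ref{PrZn<n}), then observe the two values agree. The paper instead derives the finite-$n$ \emph{event identity} $\{X_n=0\}\setminus\{Y_n=0\}=\{Z_n<n\}$ (using $\{X_n=0\}=\{Z_n<n\}\cup\{Y_n=0\}$ with the union disjoint), which yields the exact relation $\lambda_n(\{0\})-\gamma_n(\{0\})=\mu_n(E_n')$ for every $n$, and only then lets $n\to\infty$, using $\gamma_n(\{0\})=q^n\to 0$. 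The paper's derivation buys two things your computation does not: first, it exhibits (\ref{th-eq}) as the endpoint of a chain of legitimate probabilistic manipulations on events rather than a numerical coincidence of two separately computed limits, which is the rhetorical point of calling the expression ``deducible''; second, the intermediate identity (\ref{th-prf2}) is reused verbatim in the proof of Theorem \ref{th2}, where $|\lambda_n(\{0\})-\mu_n(E_n')|=\gamma_n(\{0\})\to 0$ verifies condition (a) of Lemma \ref{le}. Your treatment of the second claim --- that $\lim_n[\mu_n(E_n')]$ is escaped mass and not a probability, with the cross-check against the generalized limit $[\lim_n\mu_n]({\mathbb R})=0$ from Example \ref{eg9} --- matches and slightly amplifies what the paper says, so no gap there; but if you intend to go on to prove Theorem \ref{th2} you will want to extract the relation (\ref{th-prf2}) explicitly rather than only the common value $q$.
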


\begin{proof}
For $n = 1, 2, \cdots$,
$\{X_n=0\} = \{Z_n<n\}\cup\{Y_n=0\}$ and
$\{Z_n<n\}\cap\{Y_n=0\}=\emptyset$,
where $Y_n$ is given by (\ref{Yn}). So
$\{X_n=0\}\setminus\{Y_n=0\} = \{Z_n<n\}$ and
\begin{equation}
\label{th-prf}
{\mathbb P}(\{X_n=0\}\setminus\{Y_n=0\})
={\mathbb P}(X_n=0)-{\mathbb P}(Y_n=0) = {\mathbb P}(Z_n<n).
\end{equation}
The left-hand side of the first equality in (\ref{th-prf})
is the probability of $\{X_n=0\}\setminus\{Y_n=0\}$, i.e.,
``the $n^{\mathrm{th}}$ trial fails but not
every trial up to the $n^{\mathrm{th}}$ one fails''.
This event is characterized by
$X_n$ and $Y_n$ so its   
probability can be
calculated by $\lambda_n$ and $\gamma_n$, i.e.,         
${\mathbb P}(X_n=0)=\lambda_n(\{0\})$ and
${\mathbb P}(Y_n=0)=\gamma_n(\{0\})$,
where $\gamma_n$ is the probability measure induced by $Y_n$.
The right-hand side of the last equality in 
(\ref{th-prf}) is the probability of $\{Z_n<n\}$, i.e.,
``for some $j < n$ the $j^{\mathrm{th}}$ trial succeeds but
the $(j+k)^{\mathrm{th}}$ trial fails 
for each $k = 1, 2, \cdots, n-j$''.
This event is
characterized by $Z_n$ and hence its probability
can be calculated by $\mu_n$,
i.e., ${\mathbb P}(Z_n<n)=\mu_n(E_n')$,                     
see (\ref{PrZn<n}).
Therefore
\begin{equation}
\label{th-prf2}
\lambda_n(\{0\})-\gamma_n(\{0\})
=\mu_n(E_n'),\;
n = 1, 2, \cdots.
\end{equation}
Letting $n\to\infty$ on both sides of (\ref{th-prf2}) gives (\ref{th-eq}).
Note that
$\lim_n[\lambda_n(\{0\})]=[\lim_n\lambda_n](\{0\})=\lambda(\{0\})$ and
$\lim_n[\gamma_n(\{0\})]=[\lim_n\gamma_n](\{0\}) =\gamma(\{0\})=0$
where $\gamma=[\lim_n\gamma_n]$
(see Examples \ref{eg7} and \ref{eg8}).
The event associated with the probability $\lambda(\{0\})$ is
$\{X=0\}$, where $X$ is a Bernoulli random variable identically
distributed as $X_n$. However, as part of mass ``escaped to infinity'',
$\lim_n[\mu_n(E_n')]$ is not a 
probability (see Examples \ref{eg4} and \ref{eg5}).
\end{proof}

A mathematical theory is inconsistent if an expression
$x=y$ is deduced in the theory, such that either (i) the expression
violates the presupposition of equality, or (ii)
$x$ and $y$ are numerically different.
The deduced expression is erroneous
only when the theory is proved to be contradiction-free
as requested by Hilbert.
According to G{\" o}del's second incompleteness theorem, 
measure-theoretic probability is one of the theories 
for which such a proof does not exist.
By Lemma \ref{le2}, (\ref{th-eq}) is deducible in measure-theoretic probability,
which implies that the theory is not consistent
as (\ref{th-eq}) violates the presupposition of equality.

\begin{theo}
\label{th2}
The mathematical theory of probability (i.e., measure-theoretic
probability) is inconsistent.
\end{theo}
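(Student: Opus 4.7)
The plan is to obtain Theorem \ref{th2} as an almost immediate consequence of Lemma \ref{le2}, invoking the definition of inconsistency given at the beginning of Section \ref{sec1} together with the presupposition of equality. First I would restate the criterion: a theory is inconsistent if it deduces an ``equality'' $x=y$ whose two sides either represent quantities of different kinds (violating the presupposition of equality) or have unequal numerical values. My route will be the former.

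Next I would apply Lemma \ref{le2} to exhibit such a deducible expression, namely (\ref{th-eq}), $\lambda(\{0\})=\lim_n[\mu_n(E_n')]$. The lemma has already done the substantive work: on the left-hand side, $\lambda(\{0\})$ is the probability $[\lim_n\lambda_n](\{0\})$ assigned to the event $\{X=0\}$ by a genuine limit probability measure on $({\mathbb R},{\mathcal R})$; on the right-hand side, because $(\mu_n)_{n\geq 1}$ fails to be tight (Example \ref{eg10}) and the mass of $E_n'=(-\infty,n)$ escapes to infinity (Examples \ref{eg4} and \ref{eg5}), the number $\lim_n[\mu_n(E_n')]$ is merely the limit of a real sequence, not a probability. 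Hence the two sides of (\ref{th-eq}) are quantities of different kinds, the presupposition of equality is violated, and by the criterion above the theory is inconsistent.

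The step that I expect to require the most care is heading off the natural objection that (\ref{th-eq}) must simply be a mistaken derivation, so that inconsistency is not truly forced. To address this I would lean on the paragraph immediately preceding the theorem: the identity (\ref{th-prf2}) is obtained by elementary set-theoretic decomposition of $\{X_n=0\}$ together with ordinary finite additivity, and (\ref{th-eq}) then follows by standard passage to the limit. Declaring (\ref{th-eq}) to be ``erroneous'' would require an independent proof that measure-theoretic probability is contradiction-free; by G\"odel's second incompleteness theorem, no such proof is available within the relevant formal system. Thus the derivation cannot be explained away, the presupposition of equality stands violated, and Theorem \ref{th2} follows.
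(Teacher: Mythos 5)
Your proposal is correct by the paper's own standards and reaches the conclusion through the same key ingredient, Lemma \ref{le2}, but it packages the final step differently from the paper's proof. The paper's case (I) routes the argument through the \emph{contrapositive of Lemma \ref{le}}: it explicitly verifies condition (a) (namely $|\lambda_n(\{0\})-\mu_n(E_n')|=\gamma_n(\{0\})\to 0$, using (\ref{th-prf2}) and Example \ref{eg8}) and condition (b) (that $[\lim_n\lambda_n](\{0\})$ coincides with $\lim_n[\lambda_n(\{0\})]$, from Example \ref{eg7}), and then concludes that condition (c), consistency, must fail because $\lim_n[\mu_n(E_n')]$ is not a probability. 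You instead argue directly from the definition of inconsistency and the presupposition of equality --- which is in fact the argument the paper itself sketches in the paragraph immediately preceding the theorem --- so you bypass Lemma \ref{le} and the verification of (a) and (b) entirely; this is slightly more elementary and loses nothing, since Lemma \ref{le2} already certifies both that (\ref{th-eq}) is deducible and that its right-hand side is not a probability. What you omit is the paper's independent case (II): for probability measures induced by \emph{extended} random variables, evaluating the limit probabilities on the two sides of (\ref{th-prf2}) via Remark \ref{re} and Example \ref{eg9} yields $[\lim_n\lambda_n](\{0\})=q$ and $[\lim_n\mu_n]({\mathbb R})=0$, i.e., the numerically false equality $q=0$. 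Since the paper states that either case alone suffices, this omission is not a gap, but case (II) is the stronger exhibit (two genuine probabilities with different values, rather than a kind mismatch), and your G\"odel-based defense of the derivation matches the paper's framing.
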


\begin{proof}
Either of the two cases below 
shows that measure-theoretic probability is inconsistent.

(I) For probability measures induced by real-valued random
variables,
Lemma \ref{le} is equivalent to its contrapositive, i.e.,
if $\lim_n[\rho_n'(E_n')]$ is not a probability,
then at least one of the conditions (a),
(b) and (c) in the lemma cannot
be satisfied.
Let $\lambda_n(\{0\})$ and $\mu_n(E_n')$ in (\ref{th-prf2}) play the roles of
$\rho_n(E_n)$ and $\rho_n'(E_n')$ in Lemma \ref{le}, respectively.
By (\ref{th-prf2}) and Example \ref{eg8}, as $n\to\infty$,
\[
|\lambda_n(\{0\})-\mu_n(E_n')|=\gamma_n(\{0\})\to 0.
\]
So condition (a) is satisfied.
As shown in Example \ref{eg7}, $[\lim_n\lambda_n](\{0\})$ 
coincides with $\lim_n[\lambda_n(\{0\})]$. Thus
condition (b) is satisfied, too.
By Lemma \ref{le2}, (\ref{th-eq}) is deducible but
$\lim_n[\mu_n(E_n')]$ is not a probability.
From the contrapositive of Lemma \ref{le},
condition (c) does not hold.

(II) For probability measures induced by extended random variables,
we calculate the limit probabilities on both
sides of (\ref{th-prf2}). The limit probabilities on the
two sides of (\ref{th-prf2}) are $[\lim_n\lambda_n](\{0\})=q$
and $[\lim_n\mu_n](R)=0$ (see Remark \ref{re} and Example \ref{eg9}), respectively.
So the resultant
expression is $q=0$.
\end{proof}

To defend measure-theoretic probability,
someone might say: ``$\lambda(\{0\})$ and $\lim_n[\mu_n(E_n')]$ 
are the same pure number, and (\ref{th-eq}) is just
a tautology derived in a theory of pure numbers.''
Measure-theoretic probability is not a theory of pure numbers,
however. Evidently, $\lambda(\{0\})$ is a probability, and should not
be treated as a pure number. In Section \ref{sec1},
we have already explained why probabilities
and pure numbers are quantities of different kinds.

In the literature, it has been postulated that
measure-theoretic probability is a consistent theory.
However, the postulated consistency of measure-theoretic probability 
appears to be incompatible with the presupposition of equality.
Actually, it is the condition of
countable additivity  imposed on probability measures that 
leads to the contradiction.
By definition, the probability measure
${\mathbb P}$ is $\sigma$-additive, i.e., it
satisfies the assumption of countable additivity.
Clearly, if the restriction of countable additivity is removed,
then the inconsistency disappears.
The inconsistency revealed for the $\sigma$-additive measure 
indicates that countable additivity is relevant to the
inconsistency. 
Moreover, because of the assumption of countable additivity,
the meaning of ``a realizable event'' is ambiguous.
We shall clarify the ambiguity caused by
countable additivity
in the next section.

\section{Infinite Additivity and Ideal Events}
\hskip\parindent
\label{sec5}
For a probability measure ${\mathbb P}$ and an
infinite sequence $(B_n)_{n\geq 1}$ of mutually exclusive events with
${\mathbb P}(B_n)>0$ for all $n$, 
countable additivity requires
\[
{\mathbb P}(\cup_{n\geq 1}B_n) =\sum_{n\geq 1}{\mathbb P}(B_n)
\]
and amounts to infinite additivity. In a nontrivial way, the assumption of
infinite additivity implies that
the system of events, which includes infinitely many, mutually exclusive events
of {\em positive} probabilities,
is closed under the formation of
infinite unions and intersections. An event expressed by infinite
unions or intersections is an ideal event.
Relying on infinitely many operations 
in an infeasible
experiment, ideal events will not occur in the real world.
Nevertheless, such events may still be logically conceivable.
Since measure-theoretic probability is
sufficiently far developed, it seems that
the success of the theory might be able to justify
the assumption of infinite additivity \cite{Halm}.

No doubt measure-theoretic probability is very successful in
various applications.
However, the success of the theory 
cannot justify the assumption of infinite additivity.
For an infinite sequence $(B_n)_{n\geq 1}$ of mutually exclusive
events with
${\mathbb P}(B_n)>0$ for all $n$, the meaning of ``$B_n$ is realizable''
is ambiguous.
As long as the probability of an event is positive, no matter
how small it may be, the frequency of occurrence
of the event cannot remain zero forever, and hence
the event
will be observed at least once in an actual experiment.
In this sense 
the event is considered realizable.
But not all of
the events $B_n, n = 1, 2, \cdots$ are realizable
in the above sense, since
the number of events corresponding to {\em observable} phenomena in
any {\em actual} experiment is always finite. Therefore,
even if ${\mathbb P}(B_n) > 0$ for
all $n$ by assumption, there always exists an infinite 
subsequence of $(B_n)_{n\geq 1}$, such that
the occurrence of any event in the subsequence is only logically
conceivable but not practically observable; otherwise there would be
an instance of an infinite set observed in the real world,
which is absurd.
So the meaning of ``$B_n$ is realizable''
(as implied by ${\mathbb P}(B_n)>0$) is not the same for
all events in $(B_n)_{n\geq 1}$. In fact,
only for a finite number of events, ``realizable'' means
``practically observable in an actual experiment''.
For infinitely many events, the meaning of
``realizable'' is 
``logically conceivable but not practically observable in
any actual experiment''.
The ambiguity is due to the assumption of infinite additivity.

To avoid confusion due to the ambiguity, 
by ``a realizable (or real) event'' we mean
``an event which is practically observable in an actual experiment''.
In contrast to real events, we
call an event ``conceivable (or ideal)'' if a positive
probability is {\em assigned} to it
but the event cannot be observed in any actual experiment.
If one wants to acquire knowledge about the physical world by experiment,
it is necessary to distinguish between real events and
conceivable events; otherwise abstract properties of mathematical entities
might be mistaken for properties of the physical world.
As illustrated by the following two examples,
abstract properties
obtained by theoretical thinking are                       
not necessarily knowledge conveyed by real events.

\begin{exam}
\label{eg11}
Consider a sequence of binomial random variables $V_n, n=1,2,\cdots$ with probabilities
\[
{\mathbb P}(V_n =k)=\binom{n}{k}p_n^k(1-p_n)^{n-k}, k=0,1,\cdots, n.
\]
According to Poisson's theorem, as $n\to\infty$, 
if $p_n\to 0$ in such a way that $np_n\to c>0$, then 
\[
\lim_{n\to\infty}{\mathbb P}(V_n =k)=\frac{c^k e^{-c}}{k!},
k=0,1\cdots.
\]
The right-hand side of the above equality
is the distribution of a Poisson random variable $V$.
By definition,
$V$ takes every nonnegative integer as its value with a positive
probability. The Poisson distribution can be used to calculate approximately the
probabilities of $\{V_n=k\}, k=0,1,\cdots,n$ for the binomial random variables.
Although a positive probability is assigned to
each of $\{V=k\}, k=0,1,\cdots$ for the Poisson random variable, it is impossible for
all of such events $\{V=k\}$ to be observed in the physical world.  
$\Box$
\end{exam}

The foundation of conventional mathematics is
Zermelo-Fraenkel set theory with the axiom of choice (ZFC).
As a notion in conventional mathematics,
the set of nonnegative integers may be 
obtained by the axiom of infinity in ZFC.
However, mathematical entities such as
sets consisting of infinitely many elements do not exist
physically and cannot be observed by experiment. No instance of such
entities can be found in the real world.

\begin{exam}
\label{eg12}
Let $U$ be a continuous random variable with
range ${\mathbb R}$ and probability density function $g$
such that $g(x) > 0$ for any $x\in {\mathbb R}$.
Random variables with normal distributions are well-known
instances of such $U$.
By the properties of pure numbers, on the real line,
there are infinitely many
disjoint intervals $I_n$ of positive lengths, $n = 1, 2, \cdots$,
i.e., $\cup_{n\geq 1}I_n \subset {\mathbb R}$ and $I_n\cap I_{n'} =\emptyset$
if $n\not=n'$.
From the property of the continuous random variable, for each $n$,
\[
{\mathbb P}(U\in I_n) = \int_{x\in I_n}g(x)dx >0.
\]
However, only a finite number of $\{U\in I_n\}$
are realizable. Infinitely many of $\{U\in I_n\}$ are
unrealizable, even though a positive probability
is assigned to $\{U\in I_n\}$ for every $n$.
$\Box$
\end{exam}

From the point of view of pure mathematics, it is not
necessary for a continuous random variable to represent
any quantity in the physical world \cite{Litt}.
Actually, dividing a continuous physical quantity
into infinitely many parts is not feasible,
though it might be theoretically conceivable.
However, when modeling a continuous physical quantity
by a continuous random variable, some people might
naively believe that,
as a result of probability assignment based on
the assumption of infinite additivity,
infinitely many events,
such as those modeled by $\{U\in I_n\}, n=1,2, \cdots$ in Example \ref{eg12}, 
are not only conceivable but also realizable. 
Because of such
misunderstanding, someone might even
fail to differentiate abstract properties 
of pure numbers (which constitute a mathematical continuum)
from properties of continuous physical quantities.
We shall further discuss this issue and some other related
issues in the next section.

\section{Continuous Parameters and Microscopic Systems}
\hskip\parindent
\label{sec6}
The mathematical continuum
is a notion in pure mathematics, and cannot be
observed in realizable events.
For a continuous physical quantity modeled by a continuous
random variable, 
it is impossible to pick a {\em precise} value
out of its range {\it by measurement}. On the real line, strictly
between any two different pure numbers there is another. 
But all pure numbers are distinguishable by definition.
In general, any two different points of a topological space,
such as two pure numbers on the real line,
can be separated in mathematical reasoning, 
so long as the topological space
satisfies the separation axioms, which
are generalizations of properties  
of metric spaces (e.g., see \cite{Royd}).
However, as shown by Example \ref{eg12} in the previous section,
the {\em mathematical} properties of pure numbers 
cannot be attached to continuous {\em physical} quantities, even though
they are modeled by continuous random variables.
Consequently, there is no way
to measure continuous physical quantities
{\em arbitrarily} accurately, regardless of whether or not
the measurements are
subject to the constraints imposed by the uncertainty relations
in quantum mechanics.

For the same reason, if a continuous physical quantity is
modeled by a continuous parameter 
to be adjusted for an {\em actual}
experiment or measurement, and if values of the parameter are specified with an 
apparatus, then
the apparatus cannot yield
any precise value of the parameter. In fact,
what can be specified with the apparatus are {\em uncertain
intervals} if the parameter is real-valued, or {\em uncertain regions}
if the parameter takes its values from a metric space which may
not necessarily be the real line.
Such uncertain intervals (or regions)
are an intrinsic characteristic of modeling continuous physical quantities.
As a result,
it is impossible to repeat 
an actual experiment or measurement under
the same condition described by a precisely
specified value of a continuous parameter.

\begin{exam}
\label{eg13}
Let $a_0+\varphi$ represent a value of a continuous parameter, where
$a_0$ is a nonnegative integer and $\varphi$ in the interval $[0, 1)$.
Suppose we want to specify $\varphi$ precisely, which
amounts to defining a singleton set $\{\varphi\}$.
Every pure number $r\in [0, 1)$ has a precise expression in the
form of decimal expansion
\[
r = \sum_{n=1}^{\infty}\frac{r_n}{10^n}
=0.r_1r_2\cdots.
\]
{\em Theoretically}, $\{\varphi\}$
can be defined by specifying all of the digits
$a_1, a_2, \cdots$ of $\varphi$.
\[
\{\varphi\}=
\left \{x\in [0, 1):
x = \sum_{n=1}^{\infty}\frac{r_n}{10^n},\;
r_n = a_n, \; n = 1, 2, \cdots
\right\}.
\]
However, it is impossible to select any {\em precise} value from 
the interval $[0, 1)$
in {\em practical} applications. 
Involving {\em infinitely} many digits,
the defining property (i.e., $r_n = a_n, \; n = 1, 2, \cdots$)
of $\{\varphi\}$ cannot be used
to specify $\varphi$ for an actual experiment
or measurement.
Only the first $m$ digits of $\varphi$, i.e., 
$r_n = a_n, n = 1, 2, \cdots, m$ can be stated explicitly.
The condition $r_n = a_n$ where $1 \leq n \leq m$ is
weaker than that required by the precise expression of $\varphi$,
and defines an {\em uncertain interval}
\[
[\beta_m, \; \psi_m) =\left \{x\in [0, 1):
x = \sum_{n=1}^{\infty}\frac{r_n}{10^n},\;
r_n = a_n, \; 1 \leq n \leq m
\right\}
\]
which contains $\varphi$,
where $\beta_m = 0.a_1a_2\cdots a_m$ and
$\psi_m = 0.a_1a_2\cdots (a_m+1)$. All digits
$r_{m+k}, k=1, 2, \cdots$ of every $x\in (\beta_m, \; \psi_m)$
are unspecified.
Since $\psi_m > \beta_m$ for any positive integer $m$,
the uncertain interval
is {\em always} a set with the cardinality of the continuum, though
its length $\psi_m - \beta_m=10^{-m}$ can be very small for large $m$. 
If an actual experiment or measurement
relies on specifying values of such
a continuous parameter, only uncertain intervals in the range of the parameter
can be specified. 
$\Box$
\end{exam}

\begin{exam}
\label{eg14}
In connection with the preceding example, consider
a partial sum $\sum_{1\leq n\leq m}a_n10^{-n}$. 
One may use this partial sum as an approximation to
$\varphi$ in {\em numerical calculation}. Such an approximation also
implies selection of a precise value $\beta_m$ from the interval
$[0, 1)$. The digits in the decimal expansion of $\beta_m$
are $r_n = a_n$ for $1\leq n \leq m$ and $r_n = 0$ for $n > m$.
Just like $\varphi$, the number $\beta_m$ 
cannot be specified in {\em practical} applications.
As shown in Example \ref{eg13},
pure numbers in the interval $[0, 1)$ given
by their decimal expansions 
(or any other forms of precise expressions)
are merely {\em theoretically} conceivable;
they are useless for an
actual experiment or measurement.
No apparatus in the real world has an infinite
resolution to manipulate 
infinitely many digits. 
Setting $r = 0$ amounts to setting 
all of its digits $r_n=0, n = 1, 2, \cdots$.
Such a task is not practically achievable. 
Similarly, for the number $\beta_m$
corresponding to the partial sum 
$\sum_{1\leq n\leq m}a_n10^{-n}$, it is impossible
to put all the digits $r_n =0$ where $n = m+1,m+2,\cdots$. 
In other words, one cannot
pick $\beta_m$ out of the interval $[0, 1)$ with an apparatus. 
Since $\beta_n$ is an endpoint of
the uncertain interval
$[\beta_m, \psi_m)$, this example actually shows that, 
in practical applications, it is even
impossible to specify endpoints of intervals 
with certainty. Generally speaking, characterized by uncertain regions, 
such uncertainty in modeling continuous 
physical quantities differs from 
randomness studied by probability theory. In measure-theoretic probability, 
by assumption, the boundary of a region in a metric space 
(such as the endpoints of an interval on the real line) 
can be specified precisely; 
a value of a continuous parameter may be said to lie in
a region which is completely determined by its boundary; 
a probability may be
assigned to such a region to describe randomness
due to incapability of choosing a value
from a continuous range. However, uncertainty in
modeling continuous physical quantities cannot be captured in this way, 
since the boundary itself cannot be specified precisely.                   
$\Box$
\end{exam}

\begin{rema}
In experiments with macroscopic systems, or in thought experiments which
are not performed in the physical world, 
distinction between precise values and
uncertain regions
may be unnecessary. However, confusing
uncertain regions with
precisely specified values of continuous parameters
may lead to undesirable consequences, such as
misinterpreting random phenomena observed in {\em actual}
experiments with {\em microscopic} systems.
$\Box$
\end{rema}

\setlength{\unitlength}{0.3mm}
\begin{picture}(350,200)

\put(250,180){\line(5,-1){60}}
\put(250,180){\line(0,-1){60}}
\put(310,168){\line(0,-1){60}}
\put(250,120){\line(5,-1){60}}
\put(280,113){\vector(0,1){80}}
\put(280,97){\makebox(0,0){Polarizer A}}

\put(146,162){\line(4,-3){60}}      
\put(146,161){\line(-3,-5){30}}      
\put(206,116){\line(-3,-5){30}}   
\put(116,111){\line(4,-3){60}}      
\put(190,90){\vector(-4,3){80}}
\put(159,112){\line(0,1){60}}
\put(151,133){\makebox(0,0){$\theta$}}
\put(175,56){\makebox(0,0){Polarizer B}}

\put(330,160){\line(-4,-1){20}}  
\put(280,146){\line(-4,-1){85}}
\put(159,112){\vector(-4,-1){90}} 

\put(220,25){\makebox(0,0){Figure 1. Experiment with
		polarized photons.}}                                    
\end{picture}

\begin{exam}
\label{eg15}
Photons in a linearly polarized beam produced by polarizer A
arrive at polarizer B. The angle (a continuous physical quantity)
between the transmission axes of the two polarizers 
is modeled by
a continuous parameter
$\theta$ (Figure 1). The intensity of the beam originating from A
can be reduced so that only one photon
at a time arrives at B. For a
specified value of $\theta$, when arriving at B,
a photon may or may not get through the polarizer.
The transmission
probability is $\cos^2\theta$ for each photon in the beam.
Since the initial conditions are considered identical for all
photons leaving A and striking B,
according to quantum mechanics,
the behavior of individual photons observed in the experiment
is ontologically stochastic (e.g., see \cite{Aule}).

The above quantum-mechanical interpretation implies an assumption:
values of a continuous parameter (i.e., $\theta$) 
can be specified precisely in the experiment. But a seemingly precisely 
specified value of $\theta$ is
actually an uncertain interval. Moreover, the direction of motion of
a photon is also
described by a continuous (vector-valued) parameter, and hence cannot be
specified precisely. A seemingly precisely specified direction of
motion for photons
in the beam is
actually a tiny 3-dimensional uncertain region 
in the range of the vector parameter. 
As a result, photons in the same beam may not necessarily 
move in the same direction. Abstract properties of mathematical
entities (such as vectors) are highly idealized descriptions of the behavior 
of their physical counterparts (such as beams of light).
For photons arriving at B,
the initial conditions described by the
continuous parameters cannot be specified precisely, and hence
are not necessarily identical.
Lack of knowledge about
the initial conditions appears relevant to the observed random
behavior of the photons. It is still controversial and even problematic to
interpret the random behavior of the photons as a consequence of natural law.
$\Box$
\end{exam}

\section{Concluding Remarks}
\hskip\parindent
\label{sec7}
Bernoulli trials have widespread applications to the world around us.
To describe Bernoulli trials in measure-theoretic probability,
one uses a sequence of probability spaces 
$(\Omega_n, {\mathcal A}_n, {\mathbb P}_n), n = 1, 2, \cdots$.
For a fixed $n, (\Omega_n, {\mathcal A}_n, {\mathbb P}_n)$ is
called a Bernoulli scheme, which 
is the probabilistic model of an experiment
consisting of $n$ independent trials, and each trial has two
possible outcomes.
Although the number of operations required by the
experiment is finite,
it is difficult to use such models when $n$ is large.
Based on the assumption of infinite additivity,
one can obtain an ideal probabilistic model
$(\Omega, {\mathcal A}, {\mathbb P})$ 
which corresponds to the limit case of the
Bernoulli schemes as $n\to\infty$. The probability space
$(\Omega, {\mathcal A}, {\mathbb P})$ describes
an ideal experiment which has infinitely many outcomes. 
This ideal experiment
consists of an infinite number of Bernoulli trials and hence
requires infinitely many operations.
As we have shown, $(\Omega, {\mathcal A}, {\mathbb P})$ implies the
inconsistency of measure-theoretic probability, and
the assumption of infinite additivity 
may result in confusion of ideal events 
and real events.

Probabilistic models of ideal experiments
are very useful for calculating probabilities
of real events approximately. Observed frequencies of occurrence
of real events can be in good agreement with results
obtained by such approximation. 
The use of ideal models in the above sense
will obviously be unobjectionable 
from an empirical point of view \cite{Kolm}.
However, conceivable events with {\em assigned} positive
probabilities should not be mixed up with
real events. For measure-theoretic probability, 
the ultimate criterion of success 
is whether predicted probabilities of {\em real} events
in various practical applications can be verified empirically.
The criterion 
implies a constraint: Since ideal events do not exist physically, 
or since it is impossible to verify the
existence of ideal events in the physical world,
physical existence of abstract properties derived by
manipulating
probabilities of ideal events
should not be inferred from experiments.
This constraint functions as
a ``firewall'' of measure-theoretic probability in applications.
When the constraint is satisfied,
it is safe to apply measure-theoretic probability to
solve problems in the real world, which may explain why
measure-theoretic probability can be so successful even if
it is not a consistent theory.
In practice, however,
violating the constraint may lead to undesirable
consequences, such as mistaking
a physically non-existing property of a mathematical notion
for an ontological reality.
A typical violation is to use 
experimental data to ``prove'' or
``verify''
{\em physical} existence of abstract 
properties of mathematical entities which have no instances in
the physical world.

The inconsistency found in measure-theoretic probability
appears merely in the theory,
and will not affect
practical applications 
as long as ideal events in the theory
are not mistaken for observable events in the real world. Nevertheless,
the inconsistency must be resolved. Physical quantities are usually
modeled by mathematical entities. If one wants
to measure or specify a continuous physical quantity as accurately as possible,
one needs to model it by a continuous quantity in mathematics.
A continuous mathematical quantity
can be divided into ``infinitely'' many parts.
But the word ``infinite'' has different meanings
in conventional mathematics and 
in constructive mathematics (e.g., see \cite{Brid,Rose}).
Measure-theoretic probability is a branch of the former.
The notion of ``infinite'' adopted in conventional mathematics
not only causes confusion of ideal events and real events,
but also leads to the inconsistency eventually. 
Because of the assumption of infinite additivity in measure-theoretic probability,
some people might mix up abstract properties of mathematical entities 
in theoretical thinking with properties of physical
quantities observed in the real world; as a result, 
they might be misguided to give problematic or even wrong 
interpretations of random phenomena observed in
experiments with microscopic systems. However,
abstract properties of mathematical entities (such as infinite sets) 
are not necessarily 
properties of their physical counterparts, and hence 
should not be attached to physical quantities.
Hopefully, the inconsistency may be resolved
by treating ``numbers'' and ``infinite'' from the
viewpoint of constructive mathematics.

\small

\end{document}